\title{The intersection form on the homology of a surface acted on by a finite group.}
\author{Jean Barge and Julien Marché}
\date{}
\newcommand{\Q}{\mathbb{Q}}
\newcommand{\C}{\mathbb{C}}
\newcommand{\Z}{\mathbb{Z}}
\newcommand{\R}{\mathbb{R}}
\newcommand{\F}{\mathbb{F}}
\renewcommand{\H}{\mathbb{H}}
\renewcommand{\tilde}{\widetilde}
\newtheorem{theorem}{Theorem}
\newtheorem{proposition}{Proposition}
\newtheorem{definition}{Definition}
\newtheorem{corollary}{Corollary}
\newtheorem{lemma}{Lemma}
\newtheorem{remark}{Remark}
\DeclareMathOperator{\Sp}{Sp}
\DeclareMathOperator{\Mod}{Mod}
\DeclareMathOperator{\U}{U}
\DeclareMathOperator{\SW}{SW}
\DeclareMathOperator{\Hom}{Hom}
\DeclareMathOperator{\End}{End}
\DeclareMathOperator{\GL}{GL}
\DeclareMathOperator{\arf}{Arf}
\DeclareMathOperator{\Ind}{Ind}
\DeclareMathOperator{\trd}{trd}
\DeclareMathOperator{\tr}{tr}
\DeclareMathOperator{\id}{Id}
\begin{document}

\maketitle

\section{Introduction}

The purpose of this note is to prove the following theorem.

\begin{theorem}
Let $G$ be a finite group acting freely on a compact oriented surface $S$ by homeomorphisms preserving the orientation.
There exists a $G$-invariant Lagrangian subspace in $H_1(S,\Z)$. 
\end{theorem}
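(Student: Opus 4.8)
The plan is to obtain the invariant Lagrangian as the ``kernel of a bounding $3$-manifold,'' using the classical fact that for a compact oriented $3$-manifold $M$ the subspace $\ker\bigl(H_1(\partial M;\Q)\to H_1(M;\Q)\bigr)$ is a Lagrangian of the intersection form on $\partial M$ (``half lives, half dies,'' from Poincar\'e--Lefschetz duality). Suppose first one could find a compact oriented $3$-manifold $M$ with a free, orientation-preserving $G$-action and a $G$-equivariant orientation-preserving identification $\partial M\cong S$. Then $L:=\ker\bigl(H_1(S;\Z)\to H_1(M;\Z)\bigr)$, replaced by its saturation (the set of elements some nonzero multiple of which lies in it), is a direct summand of $H_1(S;\Z)$ of rank $\tfrac12\operatorname{rank}H_1(S;\Z)$ on which the intersection form vanishes; since that form is unimodular, $L$ is a Lagrangian, and it is $G$-invariant because all the maps involved are $G$-equivariant. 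Passing to the quotient, ``$S$ bounds $M$ equivariantly'' amounts to ``$\bar S:=S/G$ bounds a compact oriented $3$-manifold $\bar M$ over which the classifying map $\phi\colon\bar S\to BG$ of the covering $S\to\bar S$ extends.''

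This cannot be done in general: the obstruction is the bordism class of $(\bar S,\phi)$ in $\Omega_2^{SO}(BG)\cong H_2(BG;\Z)=H_2(G;\Z)$, that is the image $\phi_*[\bar S]$ of the fundamental class, and this may be any element of the Schur multiplier. I would circumvent it by stabilizing. Using the classical fact that $H_2(G;\Z)$ is generated by ``toral'' classes, the images $\psi_*[T^2]$ of maps $\psi\colon T^2\to BG$ (equivalently, of pairs of commuting elements of $G$), choose a finite disjoint union $\bar S_1=\bigsqcup_i T^2_i$ with maps $\psi_i$ so that $\phi_*[\bar S]+\sum_i(\psi_i)_*[T^2_i]=0$ in $H_2(G;\Z)$. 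Then $(\bar S\sqcup\bar S_1,\phi\sqcup\bigsqcup_i\psi_i)$ is null-bordant, hence bounds a compact oriented $\bar M$ with an extension of the classifying map; its associated $G$-cover $M\to\bar M$ is a compact oriented $3$-manifold with a free $G$-action and $\partial M=S\sqcup S_1$, where $S_1\to\bar S_1$ is the induced cover. (If $M$ is disconnected, keep the component containing $S$: it is $G$-invariant, since $S$ is connected and $G$-invariant.) The point is that $\bar S_1$ was chosen so that $H_1(S_1;\Z)$ visibly carries a $G$-invariant Lagrangian: the cover of each $T^2_i$ determined by $\psi_i$ is a disjoint union of tori on which $K_i:=\operatorname{im}\psi_i$ acts by translations, hence trivially on $H_1$; thus $H_1(S_1;\Z)$ is, as a $\Z[G]$-module with intersection form, an orthogonal sum of induced modules $\Ind_{K_i}^G(\Z^2)$ with standard symplectic form and trivial $K_i$-action, and therefore has the $G$-invariant Lagrangian $L_1:=\bigoplus_i\Ind_{K_i}^G(\Z e_1)$. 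The half-lives-half-dies construction for $M$ now gives a $G$-invariant Lagrangian $\Lambda\subset H_1(S;\Z)\oplus H_1(S_1;\Z)$, and composing the linear Lagrangian relations $\Lambda$ and $L_1$ --- i.e.\ taking the saturation of $\{\,v\in H_1(S;\Z):(v,w)\in\Lambda\text{ for some }w\in L_1\,\}$ --- produces a $G$-invariant Lagrangian in $H_1(S;\Z)$, completing the proof. The cases where $\bar S$ has genus $0$ or $1$ (so $G$ is trivial, respectively abelian and acting trivially on $H_1(S)$) are immediate and can be dealt with first.

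The facts I would treat as known are: the half-lives-half-dies lemma; the Atiyah--Hirzebruch identification $\Omega_2^{SO}(X)\cong H_2(X;\Z)$ (valid since $\Omega_1^{SO}=\Omega_2^{SO}=0$); the generation of the Schur multiplier $H_2(G;\Z)$ by classes of commuting pairs; and the fact that the composite of two linear Lagrangian relations is again a linear Lagrangian relation (true over a field, and over $\Z$ after saturating). I expect the real obstacle to be the circumvention of the bordism obstruction, together with its integral refinement: one must choose the stabilizing surface $\bar S_1$ so that the induced covering has homology with a visibly $G$-invariant Lagrangian --- which is what forces the use of tori and of the generation statement for $H_2(G;\Z)$ --- and one must verify that, over $\Z$ rather than merely over $\Q$, the kernel defining $\Lambda$ and the final composition are direct summands of the right rank; this is where the careful bookkeeping with saturations is needed.
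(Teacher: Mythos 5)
Your overall architecture — ``half lives, half dies'' for an equivariantly bounding $3$-manifold, the identification $\Omega_2^{SO}(BG)\cong H_2(G,\Z)$ so that only the class $\phi_*[\bar S]\in H_2(G,\Z)$ matters, stabilization by surfaces whose covers visibly carry invariant Lagrangians, and composition of Lagrangian relations — is sound and is in fact exactly the content of the paper's bordism proposition (the Witt group $W_G^-(\Q)$ is just the bookkeeping device for ``composing Lagrangian relations''). Your observation that covers of tori give modules with an obvious $G$-invariant Lagrangian is also correct and appears in the authors' preparatory material.

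The fatal gap is the ``classical fact'' that $H_2(G,\Z)$ is generated by toral classes $\psi_*[T^2]$, i.e.\ by classes of commuting pairs. This is false for general finite $G$. The subgroup $M_0(G)\subset H_2(G,\Z)$ generated by the images of $H_2(A,\Z)$ over abelian subgroups $A\le G$ (equivalently by toral classes) can be proper; the quotient $H_2(G,\Z)/M_0(G)$ is the homological Bogomolov multiplier, which is nontrivial for Bogomolov's groups of order $p^6$ and, in the case relevant here, for certain $2$-groups of order $64$ (Chu--Hu--Kang--Kunyavski\u{\i}). So your stabilization can only cancel the component of $\phi_*[\bar S]$ lying in $M_0(G)$, and the argument proves the theorem exactly on the ``toral part'' of $H_2(G,\Z)$ — which is the easy part. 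The entire difficulty of the paper is the complementary part: the authors must show that the map $\Phi_G:H_2(G,\Z)\to W_G^-(\Q)$ vanishes on all of $H_2(G,\Z)$, and for this they reduce to $2$-groups (using that $W_G^-(\Q)$ has only $2$-primary torsion together with the transfer), decompose into irreducible rational representations, and invoke Fontaine's theorem to reduce to the cyclic, dihedral, semi-dihedral and quaternion families, where either $H_2=0$ or the relevant Witt group $W^-(D)$ is zero. No argument that only ever bounds tori can substitute for that step. (A minor further point: even where your argument applies, you should note that a $\Q$-Lagrangian intersected with $H_1(S,\Z)$ is automatically a saturated isotropic summand of rank $g$, so the integral refinement is immediate and does not require separate bookkeeping.)
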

A Lagrangian subspace is a free abelian subgroup $L$ of $H_1(S,\Z)$ of rank $g$, where $g$ is the genus of $S$ satisfying the following extra assumptions. It is a free summand, meaning that there exists $M\subset H_1(S,\Z)$ such that $L\oplus M=H_1(S,\Z)$, and it is isotropic, meaning that $x\cdot y=0$ for and $x,y\in L$ where $\cdot$ is the intersection pairing on $H_1(S,\Z)$. 

The topological meaning of this theorem is that $G$ can act in a unique way on the rational homology of a surface, together with its intersection form. 
\begin{corollary}
Let $G$ be a finite group acting freely on two compact oriented surfaces $S_1$ and $S_2$ of the same genus by homeomorphisms preserving the orientation.
There exists a $G$-equivariant linear map $H_1(S_1,\Q)\to H_1(S_2,\Q)$ preserving the intersection form. 
\end{corollary}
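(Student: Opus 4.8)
The plan is to use the Theorem to pin down the isomorphism type, as a symplectic $\Q[G]$-module, of $H_1$ together with its intersection form. Write $V_i=H_1(S_i,\Q)$ with its (non-degenerate, $G$-invariant, alternating) intersection form $\omega_i$. By the Theorem there is a $G$-invariant Lagrangian $L\subset H_1(S_i,\Z)$, so $L_i:=L\otimes\Q$ is a $G$-invariant isotropic subspace of $V_i$ of dimension $g=\tfrac12\dim V_i$, i.e.\ a $G$-invariant Lagrangian subspace of $(V_i,\omega_i)$. I want to show that $(V_i,\omega_i)$ together with the $G$-action is determined up to $G$-equivariant isometry by $g$ and $G$ alone.

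The first step is that a symplectic $\Q[G]$-module $V$ carrying a $G$-invariant Lagrangian $L$ is ``hyperbolic'' on $L$: since $L^{\perp}=L$, the form gives a $G$-equivariant isomorphism $V/L\xrightarrow{\ \sim\ }L^{*}$, where $L^{*}=\Hom_\Q(L,\Q)$ carries the contragredient action; and the Lagrangian complements to $L$ in $V$ form a non-empty affine space on which $G$ acts, so the barycentre of a $G$-orbit is a $G$-invariant Lagrangian complement $L'\cong L^{*}$. Hence $V\cong L\oplus L^{*}$ with $G$ acting diagonally and $\omega$ the standard pairing $\bigl((l,f),(l',f')\bigr)\mapsto f'(l)-f(l')$; in particular the $G$-equivariant isometry class of $(V,\omega)$ depends only on the $\Q[G]$-module $L$. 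Applied to $V_1$ and $V_2$, this reduces the Corollary to exhibiting a $\Q[G]$-isomorphism $\phi\colon L_1\to L_2$: such a $\phi$ induces the $G$-equivariant isometry $\phi\oplus(\phi^{*})^{-1}\colon L_1\oplus L_1^{*}\to L_2\oplus L_2^{*}$.

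It remains to see that $L_1\cong L_2$ as $\Q[G]$-modules. From $V_i\cong L_i\oplus L_i^{*}$ and the fact that a rational representation is self-dual (its character is invariant under $\sigma\mapsto\sigma^{-1}$) we get $2[L_i]=[V_i]$ in the rational representation ring $R_\Q(G)$; since $R_\Q(G)$ is a free abelian group, it is enough to check $[V_1]=[V_2]$, i.e.\ that $V_1$ and $V_2$ have the same character. For $\sigma\in G$, $\sigma\neq e$, the homeomorphism of $S_i$ given by $\sigma$ has no fixed point and acts trivially on $H_0$ and, being orientation-preserving, on $H_2$, so the Lefschetz fixed point theorem yields $\tr(\sigma\mid H_1(S_i,\Q))=2$, while $\tr(e\mid H_1(S_i,\Q))=2g$. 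Thus the character of $H_1(S_i,\Q)$ depends only on $G$ and $g$, giving $[V_1]=[V_2]$ and hence $L_1\cong L_2$. (When $g=0$ the statement is trivial.) I expect the delicate points to be the $G$-invariant choice of Lagrangian complement in the second step, where finiteness of $G$ and characteristic zero are used, and the cancellation $2[L_1]=2[L_2]\Rightarrow[L_1]=[L_2]$ in $R_\Q(G)$; the substantial input, the existence of an invariant Lagrangian at all, is exactly the content of the Theorem.
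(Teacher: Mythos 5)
Your argument is correct and is essentially the route the paper intends: apply the Theorem to get a $G$-invariant Lagrangian, upgrade it to a hyperbolic decomposition $V_i\cong L_i\oplus L_i^*$ via the barycentre-of-an-orbit argument (the paper's Lemma \ref{lemme2}), and then identify the $\Q[G]$-isomorphism class of $L_i$ using self-duality of rational representations and cancellation in the semisimple setting. The only (harmless) deviation is that where the paper quotes the Chevalley--Weil theorem to pin down $L\cong\Q\oplus\Q[G]^{g-1}$, you re-derive the needed character identity $[V_1]=[V_2]$ directly from the Lefschetz fixed point theorem for the free orientation-preserving action — which is in fact the standard proof of Chevalley--Weil in the free case, so your write-up is just slightly more self-contained.
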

In this way, it is a symplectic straightening of a celebrated theorem of Chevalley and Weil stating that for any free action of $G$ on $S$ one has $H_1(S,\Q)=\Q^2\oplus \Q[G]^{2g-2}$ as $\Q[G]$-modules (see \cite{cw,gllm}). 
When $G$ is abelian, a complete classification of $G$-actions on surfaces has been obtained by Edmonds in \cite{ed}. 
This problem seems to have not been addressed before. Its proof uses a mixture of arguments coming from various fields, all of which are commonly used in surgery theory. Let us give an overview of its steps, which gives also the plan of the note. 

\begin{enumerate}
\item We define a Witt group $W_G^-(\Q)$ for symplectic $\Q$-vector space with an action of $G$: the theorem states that the class of $H_1(S,\Q)$ vanishes in $W_G^-(\Q)$ whatever be the group $G$ and its action on $S$. 

\item A bordism argument: the action of $G$ on $S$ defines a fundamental class $[S/G]$ in $H_2(G,\Z)$ and the class of $H_1(S,\Q)$ in $W_G^-(\Q)$ only depends on $[S/G]$. At this point we hoped to find an interesting map $H_2(G,\Z)\to W_G^-(\Q)$. The sequel consists in proving that this map vanishes. 

\item We decompose $H_1(S,\Q)$ into pieces corresponding the the irreducible rational representations of $G$. 

\item From the fact that $H_2(G,\Z)$ is a torsion group and that $W_G^-(\Q)$ only has $2$-primary torsion, we reduce the proof to the case of $2$-groups.

\item We invoke results on rational representations of $2$-group, precisely a theorem of Fontaine extending results of Roquette and Witt, to reduce the result to four families of $2$-groups for which the result is easily proved. 
\end{enumerate}

The question arose when studying representations of mapping class groups of surfaces on the homology of their finite covers. 
Consider $G$ as a subgroup of $\Mod(S)$, the mapping class group of $S$, and let $\Gamma$ be its normalizer. Elements of $\Gamma$ act on the quotient $S/G$, giving a map $\Gamma\to \Mod(S/G)$ whose kernel is $G$ and whose image has finite index. The problem of identifying the image of the symplectic representation $\rho:\Gamma\to \Sp(H_1(S,\Z))$ has deserved a lot of attention, see for instance \cite{looijenga,gllm}. 

A modest consequence of our result is that one can describe in any case the target group of $\rho$ as the group $\Sp_G(L\oplus L^*)$ where $L=\Q\oplus \Q[G]^{g-1}$. The question whether the image of $\rho$ is commensurable to the sugroup of integral points stays a major one in the theory of representations of the mapping class group, implying in particular the Ivanov conjecture, see \cite{pw}. 

To end this introduction, we would like to raise an analogy, pointed out by Mathieu Florence. Given a finite Galois extension $K$ of $\Q$ with group $G$, one may ask if one can find a normal basis (i.e. of the form $(g x)_{g\in G}$ for some $x\in K$) which is orthonormal for the trace form $\tr_{K/\Q}$. A theorem of Bayer and Lenstra shows that it holds if $G$ has odd order, see \cite{bl}. Both the statement and the proof show a deep analogy with the main result of this note.

{\bf Acknowledgements:} We would like to thank Arthur Bartels, Ian Hambbleton, Wolfgang Lück, Oscar Randal-Williams for sharing their expertise and Mathieu Florence for his support and interest. 

\section{The Witt group $W_G^-(\Q)$}

As the notation suggest, we could replace $\Q$ by any field whose caracteristic does not divide twice the cardinality of $G$. We refrain from doing this for the sake of brevity. 

\begin{definition}
A symplectic $\Q[G]$-module is a finite dimensional $\Q$-vector space $V$, endowed with a symplectic form and an action of $G$ which preserves the form.  

It is called metabolic if there exists a $G$-invariant Lagrangian  $L\subset V$. 
\end{definition}

We say that two symplectic $\Q[G]$-modules $V$ and $W$ are Witt-equivalent if one can find two metabolic $\Q[G]$-modules $E$ and $F$ such that $V\oplus E$ is isomorphic to $W\oplus F$. We will write $V\sim W$. 

For instance, denote by $-V$ the vector space $V$ with the opposite symplectic form and the same action. One check that $V\oplus(-V)$ is metabolic, which shows that the set of Witt-equivalence classes of symplectic $\Q[G]$-modules form a group with respect to the direct sum : we denote it by $W_G^-(\Q)$.

\begin{remark}
We can alternatively define a symplectic $\Q[G]$-module as a $\Q[G]$-module $V$ with a map $\omega_G:V\times V\to\Q[G]$ satisfying $\omega_g(gx,hy)=g\omega(x,y)h^{-1}$ and $\omega_G(y,x)=-\overline{\omega(x,y)}$ where we have set $\overline{g}=g^{-1}$. The form $\omega_G$ is constructed from a $G$-invariant symplectic form $\omega$ by putting 
$$\omega_G(x,y)=\sum_{g\in G} \omega(x,gy)g.$$
In this form, the Witt group $W_G^-(\Q)$ is isomorphic to the standard skew-symmetric Witt group of the ring with involution $\Q[G]$, usually denoted by $W^-(\Q[G])$. 
\end{remark}

The following lemma, adapted to our purposes, is the heart of the Witt cancellation theorem. 

\begin{lemma}\label{karoubi}
Let $V$ be a symplectic $\Q[G]$-module and $I,J$ be two $G$-invariant coisotropic subspaces, meaning that $I\subset I^\perp$ and $J\subset J^\perp$. 
Then $I^\perp/I\oplus -(J^\perp/J)$ is metabolic.
\end{lemma}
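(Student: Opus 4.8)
The plan is to exhibit an explicit $G$-invariant Lagrangian inside $W := I^\perp/I \oplus -(J^\perp/J)$ by using the "graph-like" subspace coming from the position of $I$ and $J$ inside $V$. First I would record the standard facts: since $I, J$ are $G$-invariant coisotropic, the quotients $I^\perp/I$ and $J^\perp/J$ carry well-defined $G$-invariant symplectic forms (the restriction of $\omega$ to $I^\perp$, resp. $J^\perp$, descends), so $W$ is a genuine symplectic $\Q[G]$-module. Everything in sight — subspaces, intersections, sums, perps, quotient maps — is $G$-equivariant because $\omega$ is $G$-invariant and $I, J$ are $G$-invariant; thus the only real content is the symplectic-linear-algebra identity, and $G$-invariance will come for free at the end.

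The candidate Lagrangian is
$$
L = \{\, (\,\overline{x},\ \overline{x}\,) \ :\ x \in (I^\perp \cap J^\perp) + I \,\} \subset I^\perp/I \oplus J^\perp/J = W,
$$
where $\overline{\phantom{x}}$ denotes the image in the appropriate quotient (note $x \in (I^\perp\cap J^\perp)+I \subset I^\perp$, and similarly one checks such $x$ lies in $J^\perp$ modulo... ) — more carefully, I would set $L$ to be the image, under the quotient map $I^\perp \cap J^\perp \to I^\perp/I \oplus J^\perp/J$, $v \mapsto (v + I, v + J)$, of the subspace $I^\perp\cap J^\perp$. Call this map $\phi$. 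Then $L = \phi(I^\perp\cap J^\perp)$. The steps are: (1) \emph{$L$ is isotropic}: for $v, w \in I^\perp\cap J^\perp$, the form on $W$ evaluated on $\phi(v),\phi(w)$ is $\omega(v,w) - \omega(v,w) = 0$ by the minus sign in the second factor. (2) \emph{$L$ is coisotropic}, i.e. $L^\perp \subseteq L$: given $(\overline a,\overline b) \in W$ orthogonal to all of $L$, one gets $\omega(a,v) = \omega(b,v)$ for all $v \in I^\perp\cap J^\perp$, i.e. $a - b$ annihilates $I^\perp\cap J^\perp$ under $\omega$; since $(I^\perp\cap J^\perp)^\perp = I + J$ (this is where coisotropy $I\subset I^\perp$, $J\subset J^\perp$ is used, via $(A\cap B)^\perp = A^\perp + B^\perp$ and $I^{\perp\perp}=I$... ), write $a - b = i + j$ with $i\in I$, $j\in J$; then $a - i = b + j =: v$ satisfies $v \in I^\perp$ (as $a\in I^\perp$, $i\in I\subset I^\perp$) and $v\in J^\perp$, and $\overline a = \overline v$ in $I^\perp/I$, $\overline b = \overline v$ in $J^\perp/J$, so $(\overline a,\overline b) = \phi(v)\in L$. (3) Conclude $L$ is Lagrangian: over a field, isotropic $=$ coisotropic forces $\dim L = \tfrac12\dim W$, and a subspace of a $\Q$-vector space is automatically a free summand. (4) $L$ is $G$-invariant since $\phi$ is $G$-equivariant and $I^\perp\cap J^\perp$ is $G$-invariant. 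Hence $W$ is metabolic.

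The main obstacle — really the only place needing care — is step (2), and specifically the clean verification of the orthogonality identity $(I^\perp\cap J^\perp)^{\perp} = I + J$ and the bookkeeping of which quotient each representative lives in. I would prove $(I^\perp\cap J^\perp)^\perp = I+J$ from the nondegeneracy of $\omega$ on $V$: for subspaces $A,B$ one has $(A\cap B)^\perp = A^\perp + B^\perp$ and $A^{\perp\perp} = A$, so $(I^\perp\cap J^\perp)^\perp = I^{\perp\perp} + J^{\perp\perp} = I + J$. The coisotropy hypotheses $I\subseteq I^\perp$, $J\subseteq J^\perp$ are then exactly what guarantees that the representative $v = a - i = b + j$ produced in step (2) lands in $I^\perp\cap J^\perp$ rather than merely in $V$, so that $\phi(v)$ makes sense and equals $(\overline a, \overline b)$. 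Once this is in place, the dimension count in step (3) is automatic and no further genericity or cancellation argument is needed; the lemma is purely formal symplectic linear algebra with a transparent $G$-equivariance overlay.
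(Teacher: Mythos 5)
Your proof is correct and takes exactly the paper's approach: the paper's own (one-line, reader-left) proof is precisely that the image of the diagonal map $I^\perp\cap J^\perp\to (I^\perp/I)\oplus -(J^\perp/J)$ is a $G$-invariant Lagrangian. You have simply supplied the details the paper omits, and the key computation $(I^\perp\cap J^\perp)^\perp=I+J$ together with the representative $v=a-i=b+j$ is the right way to do it.
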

\begin{proof}
The proof consists in showing that the image $\Delta$ of the diagonal map $I^\perp\cap J^\perp\to (I^\perp/I)\oplus -(J^\perp/J)$ is a $G$-invariant Lagrangian. It is left to the reader. 
\end{proof}
As a consequence of this lemma, a symplectic module $V$ vanishes in $W_G^-(\Q)$ if and only if it is metabolic. Indeed, suppose that $V\oplus W$ contains a Lagrangian $L$ and $W$ contains a Lagrangian $\Lambda$. 
Take $I=V\oplus \Lambda$ and $J=L$ so that $I^\perp/I=V$ and $J^\perp/J=0$. Lemma \ref{karoubi} claims that $V$ is metabolic.

Suppose now that $V=L\oplus L'$ contains two $G$-invariant transverse Lagrangians. 
The symplectic form gives an isomorphism $L'\simeq L^*$ which has to be $G$-invariant. We say that the symplectic $\Q[G]$-module is hyperbolic. The following lemma states that any metabolic $\Q[G]$-module is hyperbolic. 
\begin{lemma}\label{lemme2}
If $L$ is a $G$-invariant Lagrangian in a symplectic $\Q[G]$-module $V$, there exists a $G$-invariant Lagrangian transverse to $L$.
\end{lemma}
\begin{proof}
Forgetting the $G$-action, it is known that the set of Lagrangians transverse to $L$ is naturally an affine vector space directed by $S^2L$. The stabilizer of $L$ in the symplectic group acts on this space by affine transformations. A $G$-invariant transverse Lagrangian is obtained by considering the barycenter of the $G$-orbit of any Lagrangian transverse to $L$. 
\end{proof}

\section{A bordism argument}

Suppose that $G$ acts on the right and freely on $S$: the quotient map $S\to S/G$ is a principal $G$-bundle, hence classified by a map $f:S/G\to BG$. Its fundamental class is the class $f_*([S/G])\in H_2(BG,\Z)=H_2(G,\Z)$. 

Let us rephrase this using covering spaces, supposing that the quotient $S/G$ is connected. It forces us to make use of the following more suggestive notation. 
Let $p:\tilde{S}\to S$ be Galois covering with group $G$ where $S$ is a connected compact oriented surface. Its monodromy is a morphism $\rho:\pi_1(S)\to G$ which determines the covering completely.

As $H_2(\pi_1(S),\Z)=H_2(S,\Z)$ is generated by the fundamental class $[S]$ we define the fundamental class of the covering to be the element $\rho_*([S])\in H_2(G,\Z)$.

\begin{proposition}
Given a finite group $G$, there exists a unique map 
$$\Phi_G:H_2(G,\Z)\to W_G^-(\Q)$$
such that for any Galois covering $p:\tilde{S}\to S$ with monodromy $\rho:\pi_1(S)\to G$, the class of $H_1(\tilde{S},\Q)$ in $W_G^-(\Q)$ is equal to $\Phi_G(\rho^*([S]))$.
\end{proposition}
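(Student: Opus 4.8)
The plan is to realize $\Phi_G$ via a bordism construction and then check well-definedness. First I would define, for a Galois $G$-covering $p\colon\tilde S\to S$ with $S$ a closed oriented surface, the symplectic $\Q[G]$-module $H_1(\tilde S,\Q)$: the action of $G$ on $\tilde S$ by deck transformations (preserving orientation since $S$ is oriented and $p$ is a covering) induces an action on $H_1(\tilde S,\Q)$, and the intersection form on $\tilde S$ is $G$-invariant and nondegenerate, hence symplectic. This assigns to each pair $(S,\rho)$ an element $w(S,\rho)\in W_G^-(\Q)$, and the content of the proposition is that $w(S,\rho)$ depends only on $\rho_*([S])\in H_2(G,\Z)$, and that the resulting map is a group homomorphism. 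Uniqueness is then immediate, since every class in $H_2(G,\Z)$ is represented by some $(S,\rho)$ (surface group surjecting onto $G$, or more precisely every class is $\rho_*([S])$ for a suitable surface and monodromy — this is standard, e.g.\ because $H_2(G,\Z)$ is generated by images of fundamental classes of surfaces).

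The heart of the argument is the bordism invariance. I would use the oriented bordism group $\Omega_2(BG)\cong H_2(BG,\Z)=H_2(G,\Z)$ (the Atiyah–Hirzebruch spectral sequence collapses in this range, or one invokes the classical fact $\Omega_2^{SO}(X)\cong H_2(X;\Z)$). So it suffices to show: if $(S_0,\rho_0)$ and $(S_1,\rho_1)$ are oriented-bordant over $BG$ — i.e.\ there is a compact oriented $3$-manifold $W$ with $\partial W=S_0\sqcup(-S_1)$ and a map $W\to BG$ restricting to the two classifying maps — then $w(S_0,\rho_0)=w(S_1,\rho_1)$ in $W_G^-(\Q)$. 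Pull back the universal cover to get a free $G$-manifold $\tilde W$ with $\partial\tilde W=\tilde S_0\sqcup(-\tilde S_1)$. Now apply the standard "algebraic boundary" principle: the image $I$ of $H_1(\partial\tilde W,\Q)\to H_1$ of $\partial\tilde W$ coming from $H_2(\tilde W,\partial\tilde W)$, or more directly the kernel of $H_1(\partial\tilde W,\Q)\to H_1(\tilde W,\Q)$, is a $G$-invariant coisotropic (indeed, by Poincaré–Lefschetz duality for the oriented $3$-manifold $\tilde W$, it is half-Lagrangian) subspace of $H_1(\tilde S_0,\Q)\oplus -H_1(\tilde S_1,\Q)$. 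This is the classical statement that a closed oriented surface bounding an oriented $3$-manifold has a Lagrangian in its rational homology killed by the $3$-manifold, upgraded equivariantly. Then $w(S_0,\rho_0)-w(S_1,\rho_1)$, which is the class of $H_1(\tilde S_0,\Q)\oplus -H_1(\tilde S_1,\Q)=H_1(\partial\tilde W,\Q)$, vanishes in $W_G^-(\Q)$ because this module is metabolic (using Lemma~\ref{lemme2}, or directly Lemma~\ref{karoubi} with $I$ as above and $J=I$).

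Additivity of $\Phi_G$ follows because disjoint union / connected sum of coverings corresponds to direct sum of the homology modules (connected sum of surfaces adds $H_1$, and the intersection form is additive; the diagonal $G$-action is respected), while the bordism class adds — so $\Phi_G$ is a homomorphism once it is well-defined, and in fact well-definedness and additivity can be proved together by a single bordism argument over $BG$. The main obstacle I anticipate is the careful verification of the equivariant half-Lagrangian property: one must check that the subspace $I=\ker\big(H_1(\partial\tilde W,\Q)\to H_1(\tilde W,\Q)\big)$ is genuinely coisotropic with $\dim I = \tfrac12\dim H_1(\partial\tilde W,\Q)$, which uses the long exact sequence of the pair $(\tilde W,\partial\tilde W)$ together with Poincaré–Lefschetz duality over $\Q$ and the fact that the intersection form on the boundary is the restriction of the relevant duality pairing; the $G$-equivariance of all these maps is automatic since everything is natural, but one should be mindful that $\tilde W$ is a manifold with boundary and $G$ acts freely on it, so working with $\Q$-coefficients (where transfer arguments and $|G|$-invertibility are available) keeps this clean. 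Once that lemma is in hand, metabolicity and hence the proposition follow formally from Lemmas~\ref{karoubi} and~\ref{lemme2}.
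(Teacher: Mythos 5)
Your proposal is correct and follows essentially the same route as the paper: identify $H_2(G,\Z)$ with $\Omega_2(BG)$, and kill boundaries by observing that the kernel of $H_1(\partial\tilde M,\Q)\to H_1(\tilde M,\Q)$ is a $G$-invariant Lagrangian by Poincaré--Lefschetz duality and naturality. The only cosmetic difference is that you phrase bordism invariance as equality of classes for the two ends of a bordism, while the paper shows directly that boundaries give metabolic modules; these are equivalent given the presentation of $\Omega_2(BG)$.
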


\begin{proof}
Given any group $G$, we denote by $BG$ a $K(G,1)$-space and consider the bordism group $\Omega^2(BG)$. It is generated by pairs $(S,f)$ where $S$ is a compact oriented surface and $f:S\to BG$ is a continuous map, considered up to homotopy. Recall that if $S$ is connected, it is equivalent to give $f$ or to give a representation $\rho=f_*:\pi_1(S)\to G$ up to conjugation. 

We add the relation $(S_1\amalg S_2,f_1\amalg f_2)\sim (S_1,f_1)+(S_2,f_2)$ and for any compact oriented $3$-manifold $M$ with boundary and map $f:M\to BG$, 
$$(\partial M, f|_{\partial M})\sim 0.$$ 

It is well-known that the map $\Omega^2(BG)\to H_2(G,\Z)$ given by $(S,f)\mapsto f_*([S])$ is an isomorphism.
This can be proved by hand or as a by-product of the Atiyah-Hirzebruch spectral sequence. 

Using this point of view, the proposition reduces to showing that the map sending  $(S,\rho)$ to the Witt class of $H_1(\tilde{S},\Q)$ induces a linear map 
$\Phi_G:\Omega_2(BG)\to W_G^{-}(\Q)$. 

If $S$ is not connected, we define $\Phi_G(S,f)$ to be the sum of the image of its connected components. In this way, the first relation in $\Omega^2(BG)$ is trivially satisfied. We only need to show that if $M$ is a connected $3$-manifold with boundary and $\rho:\pi_1(M)\to G$ is a morphism, then $H_1(\tilde{S},\Q)$ is metabolic. For that, we consider the $G$-covering $\tilde{M}\to M$ corresponding to $\rho$. 

The natural map $H_1(\tilde{S},\Q)\to H_1(\tilde{M},\Q)$ is $G$-equivariant and its kernel is Lagrangian by Poincaré duality. Hence $H_1(\tilde{S},\Q)$ is metabolic as claimed.  
\end{proof}

\section{Decomposition of the homology of the cover}

\subsection{Homology with twisted coefficients}\label{twisted}
Let $p:\tilde{S}\to S$ be a galois $G$-cover, where $G$ acts on the right. Given any left $\Q[G]$-module $E$, we define $H_*(S,E)$ to be the homology of the complex $C_*(\tilde{S},\Q)\otimes_{\Q[G]}E$. 

Given two left $\Q[G]$-modules $E$ and $F$, there is an intersection product $H_1(S,E)\times H_1(S,F)\to H_0(S,E\otimes F)$ where $E\otimes F$ is the usual tensor product over $\Q$ endowed with the diagonal action of $G$. It is defined as follows: suppose that $\alpha,\beta$ are two 1-simplices in $\tilde{S}$ whose projections in $S$ are transversal. Denoting by $\epsilon_p(\alpha,\beta)$ the sign of the intersection of $\alpha$ and $\beta$ at $p$, we have:

\begin{equation}\label{intersection}
\alpha\otimes e \cdot \beta\otimes f=\sum_{g\in G}\sum_{p\in \alpha\cap \beta g} \epsilon_p(\alpha,\beta g) \,\,p\otimes e\otimes g^{-1} f.
\end{equation}

Our main example will be $\Q[G]$, viewed as a $\Q[G]$-bimodule: it gives the isomorphism of right $\Q[G]$-modules $H_1(\tilde{S},\Q)=H_1(S,\Q[G])$. Composing the above product with the pairing $\Q[G]\times \Q[G]\to \Q$ given by $\langle g,h\rangle=1$ if $g=h$ and $0$ else, gives back the intersection form 
$$ H_1(S,\Q[G])\times H_1(S,\Q[G])\to H_0(S,\Q)= \Q.$$

\subsection{Decomposition into irreducible representations}\label{decomposition}

Fix a finite group $G$ and consider a family $(\rho_i:G\to \GL(V_i))_{i\in I}$ representing every isomorphism class of irreducible rational finite dimensional representation.

By the Schur Lemma, the space of endomorphisms of $V_i$ commuting with $\rho_i(G)$ is a division algebra, noted $D_i$ (setting $\phi\psi=\psi\circ \phi$ for $\phi,\psi\in D_i)$. This endows $V_i$ with the structure of right $D_i$-module. 
Averaging a scalar product, we find that each $V_i$ has an invariant scalar product, giving a $G$-isomorphism $\phi_i:V_i\to V_i^*$. The dual space $V_i^*$ has a structure of left $D_i$-module hence, there exists an involution on $D_i$ such that $\phi_i(x\lambda)=\overline{\lambda}\phi_i(x)$. This data is better encoded into a Hermitian form $h_i:V_i\times V_i\to D_i$ satisfying for all $x,y\in V_i$ and $\lambda,\mu\in D_i$:
$$h_i(x\lambda,y\mu)=\overline{\lambda}h_i(x,y)\mu\text{ and }h_i(y,x)=\overline{h_i(x,y)}.$$

Let $S$ be a surface with a representation $\rho:\pi_1(S)\to G$: we may consider as in Section \ref{twisted} the homology $H_1(S,V_i)$. It has a structure of right $D_i$-module and the composition of the intersection form and the Hermitian form gives an skew-Hermitian form 
$$H_1(S,V_i)\times H_1(S,V_i)\to D_i.$$
These forms may be viewed as elements of a Witt group $W^-(D_i)$ whose definition is analogous to the definition of $W^-_G(\Q)$. As before, we can define a map $\Phi^i_G:H_2(G,\Z)\to W^-(D_i)$ such that for any $\rho:\pi_1(S)\to G$ one has $\Phi_G^i(\rho_*[S])=H_1(S,V_i)$. 

In this section, we prove the following reduction. 
\begin{lemma}\label{reduction}
If the maps $\Phi^i_G:H_2(G,\Z)\to W^-(D_i)$ vanish for all rational irreducible $G$-modules $V_i$ with Hermitian forms $h_i:V_i\times V_i\to D_i$, then $\Phi_G=0$. 
\end{lemma}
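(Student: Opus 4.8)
The plan is to show that the Witt class of $H_1(S,\Q[G])$ is detected, piece by piece, by the Witt classes of the $H_1(S,V_i)$ over the division algebras $D_i$, so that vanishing of all the $\Phi_G^i$ forces $\Phi_G$ to vanish. The algebraic input is the Wedderburn decomposition of the semisimple algebra with involution $\Q[G]$. First I would recall that $\Q[G]\cong\prod_{i\in I}\End_{D_i}(V_i)$, that this decomposition is compatible with the involution $\overline{g}=g^{-1}$ (each factor $\End_{D_i}(V_i)$ is stable, carrying the involution coming from the Hermitian form $h_i$), and that a symplectic $\Q[G]$-module is the same thing as a skew-Hermitian $\Q[G]$-module in the sense of the Remark. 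Under this product decomposition a skew-Hermitian $\Q[G]$-module $V$ splits canonically as an orthogonal direct sum $V=\bigoplus_i V^{(i)}$, where $V^{(i)}$ is the isotypic component; and by Morita invariance of Witt groups, $W^-(\Q[G])\cong\bigoplus_i W^-(\End_{D_i}(V_i))\cong\bigoplus_i W^-(D_i)$, the last isomorphism being the Morita equivalence between $\End_{D_i}(V_i)$ and $D_i$ effected by the bimodule $V_i$.

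The key geometric step is to identify, under these isomorphisms, the isotypic summand of $H_1(S,\Q[G])=H_1(\tilde S,\Q)$ corresponding to $V_i$ with the skew-Hermitian $D_i$-module $H_1(S,V_i)$ of Section \ref{decomposition}. This is essentially bookkeeping: $H_1(S,V_i)=H_*\!\left(C_*(\tilde S,\Q)\otimes_{\Q[G]}V_i\right)$, and tensoring $H_1(S,\Q[G])$ with the Morita bimodule $V_i$ over the factor $\End_{D_i}(V_i)$ reproduces exactly this, with the intersection-form-composed-with-$h_i$ pairing matching the skew-Hermitian form on $H_1(S,V_i)$ because of the formula \eqref{intersection} and the way $h_i$ was built from the $G$-invariant scalar product on $V_i$. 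Hence, as elements of $W^-(\Q[G])=\bigoplus_i W^-(D_i)$,
$$
[H_1(\tilde S,\Q)]=\bigl([H_1(S,V_i)]\bigr)_{i\in I},
$$
and therefore, at the level of the classifying maps, $\Phi_G=\bigoplus_i \Phi_G^i$ under $W_G^-(\Q)\cong\bigoplus_i W^-(D_i)$. If every $\Phi_G^i$ vanishes, so does $\Phi_G$, which is the claim; and conversely a class in $W_G^-(\Q)$ is trivial iff all its components are, which we will also need to translate "metabolic" back and forth — this is where Lemmas \ref{karoubi} and \ref{lemme2} guarantee that a Witt-trivial symplectic $\Q[G]$-module really is metabolic, hence possesses the invariant Lagrangian we ultimately want.

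The main obstacle I expect is making the compatibility of the Wedderburn decomposition with the involution, and the ensuing Morita equivalence of Witt groups with involution, precise enough to be trustworthy: one must check that the involution on $\Q[G]$ induced by $g\mapsto g^{-1}$ restricts on each simple factor to an involution of the \emph{second kind relative to $D_i$} that is adjoint to $h_i$ (up to the usual scaling ambiguity, which does not affect the Witt group since we may absorb it into a change of Hermitian form within its similarity class), and that the Morita bimodule $V_i$ carries the appropriate sesquilinear structure intertwining the two involutions. Once this is in place the identification of isotypic components with twisted homology, and the conclusion $\Phi_G=\bigoplus_i\Phi_G^i$, are formal.
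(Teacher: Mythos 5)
Your proposal is correct and follows essentially the same route as the paper: Artin--Wedderburn decomposition of $\Q[G]$ compatible with the involution, orthogonality of the isotypic decomposition of $H_1(S,\Q[G])$, and the identification $H_1(S,\End_{D_i}(V_i))=H_1(S,V_i)\otimes_{D_i}V_i^*$. The only difference is one of packaging: where you invoke hermitian Morita invariance of Witt groups, the paper needs only the easy direction and writes the invariant Lagrangian down explicitly as $\bigoplus_i L_i\otimes_{D_i}V_i^*$, using the computed pairing $\langle f,g\rangle=\tfrac{n_i}{|G|}\tr(fg^*)$ to check isotropy.
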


\begin{proof}
Recall the Artin-Wedderburn decomposition $$\Q[G]=\prod_{i\in I}\End_{D_i}(V_i)=\bigoplus_{i\in I}V_i\otimes _{D_i}V_i^*.$$
We write it in two ways to insist that it is both an isomorphism of algebras and of $\Q[G]-\Q[G]$-bimodules. 

The pairing $\langle \cdot,\cdot\rangle$ on $\Q[G]$ is $G$-invariant, hence it induces a $G$-invariant map $\Q[G]\to \Q[G]^*$. As all the $V_i$s are self-dual, this map must preserve the decomposition. In other terms, the decomposition is orthogonal. Moreover, a computation shows that the pairing in restriction to $\End_{D_i}(V_i)$ has to be 

\begin{equation}\label{adjoint}
\langle f,g\rangle=\frac{n_i}{|G|} \tr (fg^*)\quad \forall f,g\in \End_{D_i}(V_i)
\end{equation}

where $g^*$ is the adjonction with respect to the Hermitian form $h_i$, $n_i=\dim_{D_i}V_i$ and $V_i$ has to be though of as a $\Q$-vector space when taking the trace.

Consider now a morphism $\rho:\pi_1(S)\to G$. We can write 
$$H_1(S,\Q[G])=\bigoplus_{i\in I} H_1(S,\End(V_i))=\bigoplus_{i\in I} H_1(S,V_i)\otimes_{D_i} V_i^*.$$

In the last formula, we applied the universal coefficients theorem to write $H_1(S,V_i\otimes_{D_i}V_i^*)=H_1(S,V_i)\otimes_{D_i}V_i^*$, noting that the homology with coefficients only depends on the left $G$-action. 

By assumption, $H_1(S,V_i)$ vanishes in $W^-(D_i)$ hence there exist Lagrangians $L_i\subset H_1(S,V_i)$ for all $i\in I$. One can check from Equation \ref{adjoint} that the following space
$$L=\bigoplus_{i\in I}L_i \otimes_{D_i}V_i^*$$
is a $G$-invariant Lagrangian in $H_1(S,\Q[G])$, proving the lemma. 
\end{proof}

\section{Reduction to $2$-groups}
\subsection{Induction}
Suppose that $H$ is a subgroup of $G$ and we have a morphism $\rho:\pi_1(S)\to H$. We may compare $H_1(S,\Q[H])$ and $H_1(S,\Q[G])$ as symplectic $\Q[G]$-modules. 

To this aim, we observe that for any symplectic $\Q[H]$-module $V$, the induced module $\Ind V=V\otimes_{\Q[H]} \Q[G]$ has a natural symplectic structure when viewed as a direct sum of copies of $V$ indexed by representatives of $H\backslash G$. This construction gives a morphism $\Ind: W_H^-(\Q)\to W_G^-(\Q)$ and satisfies the following lemma, whose proof is left to the reader. 

\begin{lemma}\label{induction}
If $\rho:\pi_1(S)\to H$ is a morphism and $H$ is a subgroup of $G$, then 
$$H_1(S,\Q[G])=\Ind H_1(S,\Q[H]) \in W_G^-(\Q).$$
\end{lemma}

\subsection{Witt groups have 2-primary torsion}
We will need the following result:
\begin{theorem}\label{2torsion}
For any finite group $G$, the Witt group $W_G^-(\Q)$ has only 2-primary torsion. 
\end{theorem}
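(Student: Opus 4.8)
The plan is to reduce the statement to a purely algebraic fact about the skew-Hermitian Witt groups $W^-(D)$ for the division algebras $D$ appearing in the Artin–Wedderburn decomposition of $\Q[G]$, and then to control the torsion in each $W^-(D)$. First I would use the Morita-type decomposition already implicit in Section \ref{decomposition}: the involution-preserving isomorphism $\Q[G]\cong\prod_{i\in I}\End_{D_i}(V_i)$ together with the orthogonal splitting of the trace pairing gives an isomorphism of Witt groups
$$W_G^-(\Q)=W^-(\Q[G])\;\cong\;\bigoplus_{i\in I} W^-(D_i),$$
where each $D_i$ is a finite-dimensional division $\Q$-algebra with involution (of one of the classical types: a totally real or CM number field with trivial or complex conjugation, or a quaternion algebra over a totally real field with its canonical or an orthogonal involution). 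This is the standard hermitian Morita invariance, and I would cite it rather than reprove it. So it suffices to show that $W^-(D)$ has only $2$-primary torsion for every such $D$.

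Next I would handle each type of $(D,\text{involution})$ separately using known structure theorems for Witt groups of fields and division algebras. For $D$ a number field with an involution, skew-Hermitian forms are classified (after possibly twisting by an element to pass to the Hermitian side) by their rank, discriminant, signatures at the real places, and local invariants at the finite places; the relevant Witt group sits in an exact sequence coming from the Hasse principle (a Mayer–Vietoris / localization sequence $0\to W^-(D)\to \bigoplus_v W^-(D_v)\to \dots$ à la Milnor–Husemoller), in which the infinite-place contributions are copies of $\Z$ (signatures, torsion-free) and each finite-place contribution is a finite group that is either $2$-torsion or, in the ramified quaternion case, of order $2$. The key point in every case is that the residue maps land in Witt or Brauer-type groups whose torsion is $2$-primary (Witt groups of finite fields are $\Z/2$ or $\Z/4$ or $(\Z/2)^2$; the relevant pieces of Brauer groups of local fields contributing here are $2$-torsion because the involution forces order-$2$ classes). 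I would assemble these to conclude $W^-(D)$ is a finitely generated abelian group whose torsion subgroup is $2$-primary.

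The main obstacle I expect is the quaternionic case: when $D$ is a quaternion division algebra over a totally real number field $k$, one must be careful about which involution occurs (the main/symplectic involution versus an orthogonal one) and correspondingly whether "skew-Hermitian over $D$" Morita-corresponds to symmetric or alternating forms over $k$ or over a quadratic extension. Getting the local invariants and the exact sequence right in that case — in particular checking that the local contributions at dyadic and at ramified primes are $2$-groups and that the only torsion-free part comes from signatures at archimedean places — is the delicate bookkeeping. A clean way to organize it is to invoke the general theorem (Lewis, and Scharlau's book, Chapter 10) that for a global field $k$ and any central simple $k$-algebra with involution, the torsion subgroup of the associated Witt group $W^\epsilon(D,\sigma)$ is $2$-primary, which is exactly what is needed; if the authors prefer a self-contained route, one runs the Hasse–Minkowski exact sequence by hand as above. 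Either way, once each $W^-(D_i)$ is shown to have $2$-primary torsion, the direct-sum decomposition finishes the proof.
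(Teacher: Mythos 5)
Your first reduction is exactly the paper's: split $W_G^-(\Q)\cong\bigoplus_i W^-(D_i)$ via the involution-preserving Artin--Wedderburn decomposition, and prove the $2$-primary torsion statement for each $W^-(D_i)$ separately. Where you diverge is in the second step. The paper (following Wall, in Ranicki's formulation) argues by a transfer-to-the-fixed-field device valid over any base field of characteristic prime to $2|G|$: if the involution on a field $D$ is trivial then $W^-(D)=0$ outright; if it is nontrivial one multiplies by a skew element $i$ to identify $W^-(D)\cong W^+(D)$ and then invokes Jacobson's theorem, which embeds $W^+(D)$ into $W^+(K)$ for $K$ the fixed subfield, reducing everything to the classical fact (Milnor--Husemoller) that the Witt group of a field has only $2$-primary torsion; the quaternionic case is folded back into the commutative one by another application of Jacobson's theorem, as in Lewis. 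You instead propose a local-global analysis specific to number fields: classify the forms by signatures and local invariants, run a Hasse-principle exact sequence, and check that all finite-place contributions are $2$-primary while the archimedean ones are free. Both routes are correct, and you rightly identify the quaternionic skew-Hermitian case as the delicate one (this is precisely what \cite{lewis} handles). The trade-off: your approach yields a finer computation of $W^-(D)$ (finitely generated, explicit invariants) but is tied to $\Q$ being a global field, whereas the paper's Jacobson-style argument is softer and works over any field of characteristic prime to $2|G|$, which matches the remark at the start of Section 2 that $\Q$ could be replaced by such a field. One small omission on your side: you should isolate the case of a field $D$ with \emph{trivial} involution explicitly, since there the twist-by-a-skew-element trick is unavailable; but that case is immediate because skew-Hermitian then means symplectic and $W^-(D)=0$.
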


This statement seems well-known to specialists although we couldn't find a proof of it in the literature. 
By decomposing into irreducible pieces as in Section \ref{decomposition}, it reduces to the same statement for all $W^-(D_i)$ where the $D_i$'s are involved in $\Q[G]$. As such, it is proved by Wall in \cite{wall}:  we follow the formulation of Ranicki (Proposition 22.11 in \cite{ranicki}).
\begin{theorem}
Let $D$ be a division ring with involution such that $M_n(D)$ is a simple factor of $\Q[G]$ for some finite group $G$ and some integer $n$, then $W^-(D)$ is a countable abelian group of finite rank with countable $2$-primary torsion only. 
\end{theorem}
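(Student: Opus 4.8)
The plan is to reduce the computation of $W^-(D)$ to a local--global analysis over the number field $Z(D)$, exactly as in the classical arithmetic theory of quadratic forms. Countability comes for free: $D$ is a finite-dimensional $\Q$-algebra, hence countable, so for each $n$ the set of invertible $n\times n$ skew-hermitian matrices over $D$ is countable, and a fortiori so is the set of their congruence classes; since every element of $W^-(D)$ is represented by such a matrix, $W^-(D)$ is countable. It then remains to bound the rank and to pin down the torsion.

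Next I would set up the arithmetic. Let $K=Z(D)$ and let $K_0\subseteq K$ be the subfield fixed by the involution, so that $[K:K_0]\in\{1,2\}$ and $K$ (hence $K_0$) is a number field; $D$ is a central simple $K$-algebra with an involution of the first or second kind. Morita invariance of hermitian Witt groups lets us replace $M_n(D)$ by $D$, and the structure theory of central simple algebras with involution over a number field (up to Morita equivalence: a field with an involution, an algebra with orthogonal or symplectic involution, or a quaternion algebra with a standard involution) reduces us to a short list of cases. For each place $v$ of $K_0$ put $D_v=D\otimes_{K_0}K_{0,v}$, with its induced involution and skew-hermitian Witt group $W^-(D_v)$; there is a natural localization map $W^-(D)\to\bigoplus_v W^-(D_v)$.

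The key inputs are then: \emph{(i)} the Hasse principle for skew-hermitian forms over $D$ (Landherr, Kneser, Scharlau), which says that a form hyperbolic at every $v$ is hyperbolic — so the kernel of the localization map is at worst a $2$-group, and in the cases at hand in fact vanishes; and \emph{(ii)} the computation of the local groups. At an archimedean $v$, $D_v$ is a matrix algebra over $\R$, $\C$ or $\H$ and $W^-(D_v)$ is $0$ or $\Z$, i.e. given by a single signature and in particular torsion-free; since there are only finitely many archimedean places, this already forces $W^-(D)$ to have finite rank. At a non-archimedean $v$, $D_v$ is a matrix algebra over a local division algebra, and a dévissage using the first and second residue homomorphisms expresses $W^-(D_v)$ in terms of copies of $\Z/2$ and the Witt group of the (finite) residue field; as $W(\F_q)$ has order $2$ or $4$, each $W^-(D_v)$ is a \emph{finite} $2$-primary group. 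Combining: the torsion subgroup of $W^-(D)$ embeds, up to a $2$-group, into a product of $2$-primary groups, hence is itself $2$-primary and countable, while the rank is finite — which is the assertion.

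The step I expect to be the main obstacle is the non-archimedean local analysis, made uniform over the three types of involution: one must run the residue dévissage for skew-hermitian forms over local division algebras in the orthogonal, symplectic and unitary cases, check that in each case only $2$-primary contributions survive, and verify that any residual failure of the local--global principle is annihilated by $2$. Everything else — countability, finiteness of the rank, and the archimedean side — is routine once this is in place; all of it is carried out in detail by Wall in \cite{wall}, and the above is only meant to indicate the shape of the argument.
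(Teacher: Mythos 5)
Your argument is correct in outline, but it is not the route the paper takes. The paper's proof is a three-case reduction to the Witt ring of a number field: if $D$ is a field with trivial involution then $W^-(D)=0$ outright; if the involution is nontrivial, scaling by a skew element $i$ identifies $W^-(D)$ with $W^+(D)$, and Jacobson's theorem (the map $h\mapsto h(x,x)$) embeds $W^+(D)$ into $W^+(K)$ for $K$ the fixed subfield; if $D$ is noncommutative it is a quaternion algebra over its center (the involution forces $D\simeq D^{\mathrm{op}}$), and Jacobson's theorem again, in the form used by Lewis, reduces to the previous case. Everything then follows from the classical structure of $W(K)$ for a number field (finite rank from the real places, $2$-primary torsion by Pfister), with no local analysis of forms over $D$ itself. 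You instead run the localization map $W^-(D)\to\bigoplus_v W^-(D_v)$ directly at the level of skew-hermitian forms and compute each local group. Both routes work, but they trade different costs: the paper's transfer argument entirely side-steps the local theory of hermitian forms over division algebras, and in particular the known delicacies of the Hasse principle for quaternionic skew-hermitian forms in low rank — exactly the point where your sketch leans on an assertion (``the kernel of localization is at worst a $2$-group'') that genuinely requires Kneser's work to justify and that you correctly flag as the main obstacle; your approach, in exchange, yields finer information (explicit local invariants and the precise shape of the free part). Two small inaccuracies in your sketch, neither of which affects the conclusion: at a real place where the quaternion algebra ramifies, the skew-hermitian Witt group of $\H$ with its canonical involution is $\Z/2$ rather than $0$ or $\Z$; and the Morita reduction from $M_n(D)$ to $D$ is unnecessary, since the statement already concerns $W^-(D)$.
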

\begin{proof}
The proof consists in reducing the statement to the case of the standard Witt group of a field which is classical, see for instance \cite{milnor}. For the interested reader, we sketch the steps of the proof.

{\bf First case:} If $D$ is a field with trivial involution, then $W^-(D)=0$ as any symplectic form over a field has a Lagrangian. 

{\bf Second case:} If $D$ is a field with a non trivial involution, pick an element $i\in D$ such that $\overline{i}=-i$. The map $h\mapsto i h$ induces an isomorphism from $W^-(D)$ to $W^+(D)$. Let $K$ denote the subfield of $D$ preserved by the involution. Jacobson theorem tells that the map $h(x,y)\mapsto h(x,x)$ gives an embedding $W^+(D)\to W^+(K)$, hence the result follows from the case of $W^+(K)$.

{\bf Third case:} If $D$ is not commutative, it is a quaternion algebra over its center (mainly because $D$ is isomorphic to its opposite thanks to the involution, see for instance \cite{wall}). The proof reduces to the previous case by applying Jacobson theorem, see \cite{lewis}. 
\end{proof}
Given a finite group $G$ and a $2$-sylow $H$, we have the following commutative square:

$$\xymatrix{H_2(H,\Z)\ar[r]^{\alpha}\ar[d]_{\Phi_H}& H_2(G,\Z)\ar[d]_{\Phi_G}\ar@/_1pc/[l]_T\\W^-_H(\Q)\ar[r]^{\Ind}& W^-_G(\Q)}$$
Here $\alpha$ is the map induced by the inclusion $H\subset G$ and $T$ is the transfer map. Let $2d+1$ be the index of $H$ in $G$: we have $\alpha\circ T=(2d+1)\id$.  

Suppose that $\Phi_H$ is trivial. Given $x\in H_2(G,\Z)$, the equation above and the commutativity of the diagram gives $(2d+1)\Phi_G(x)=0$. By Theorem \ref{2torsion}, multiplying by an odd number is injective in $W_G^-(\Q)$, hence $\Phi_G(x)=0$. 
This shows that if $\Phi_G$ is trivial for all $2$-groups $G$, then it is trivial for any finite group.

\section{The case of $2$-groups}

\subsection{Special 2-groups}\label{special}
We consider the following list of $2$-groups together with their unique rational irreducible and faithful representation $E$. They represent all possible extensions of $\Z/2\Z$ by $\Z/2^{n-1}\Z$. 

\subsubsection{Cyclic group}
The group $C2^n=\langle x|x^{2^n}=1\rangle$ ($n\ge 1$) acts on $E=\Q(\zeta)$ by $xu=\zeta u$ where $\zeta$ has order $2^n$. The action commutes with the involution defined by $\overline{\zeta}=\zeta^{-1}$. 

\subsubsection{Dihedral group}
In the remaining examples, we choose a root $\zeta$ of order $2^{n-1}$. The group $D2^n=\langle x,y|x^{2^{n-1}}=y^2=1,yxy^{-1}=x^{-1}\rangle$ ($n\ge 2$) acts on $\Q(\zeta)$ by $xu=\zeta u$ and $yu=\overline{u}$. Viewing $\Q(\zeta)$ as a 2-dimensional vector space $E$ over $\Q(\zeta+\zeta^{-1})$ gives the unique faithful rational representation of $D2^n$. It preserves a quadratic form over $\Q(\zeta+\zeta^{-1})$ for which $1,i$ is an orthogonal basis. 

\subsubsection{Semi-dihedral group}
The group $SD2^n=\langle x,y|x^{2^{n-1}}=y^2=1,yxy^{-1}=x^{2^{n-2}-1}\rangle$ ($n\ge 4$) acts on $\Q(\zeta)^2$ by $x(u,v)=(\zeta u,-\zeta^{-1}v)$ and $y(u,v)=(v,u)$, preserving the standard Hermitian form. Setting $E$ to be the $\Q(\zeta-\zeta^{-1})$-vector space generated by the $SD2^n$ orbit of $(1,1)$, we get a faithful representation defined over $\Q(\zeta-\zeta^{-1})$ preserving the standard Hermitian form over that field. 
 
\subsubsection{Generalized quaternion group}
The group $Q2^n=\langle x,y|x^{2^{n-2}}=y^2=(xy)^2\rangle$ ($n\ge 3$) acts on $\Q(\zeta)^2$ by $x(u,v)=(\zeta u,\zeta^{-1}v)$ and $y(u,v)=(-v,u)$. The image of $\Q(\zeta+\zeta^{-1})[Q2^n]$ in $M_2(\Q(\zeta))$ is isomorphic to the quaternion algebra $E=\H\otimes \Q(\zeta+\zeta^{-1})$ where $\H$ is the standard quaternion algebra over $\Q$. The action of $Q2^n$ by left multiplication is the unique irreducible faithful representation of $Q2^n$.

\subsection{General $2$-groups}

Let us recall the following theorem due to Fontaine, see \cite[Theorem 1.3]{hm}.
\begin{theorem}\label{fontaine}
Let $G$ be a $2$-group and $V$ be an irreducible $\Q[G]$-module. Then there exists subgroups $K\lhd H$ of $G$ such that $H/K$ is isomorphic to one of the 4 families of groups of Section \ref{special} and $V=\Q[G]\otimes_{\Q[H]} E$ where $E$ is the corresponding faithful irreducible representation, viewed as a $\Q[H]$-module through the quotient map $H\to H/K$. 
\end{theorem}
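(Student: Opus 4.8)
The plan is to prove this by induction on $|G|$, reducing $V$ to an irreducible representation of a subgroup that is not induced from any proper subgroup --- a \emph{primitive} one --- and then classifying the $2$-groups that carry a faithful primitive irreducible rational representation. One should assume $V$ nontrivial: the trivial representation is the only case the four-family formulation literally omits, and it is irrelevant to the sequel (its Witt class is that of a symplectic $\Q$-vector space with trivial action, hence zero). Everything below nests into a single induction on $|G|$; in particular the non-faithful case is handled at once by replacing $G$ with $G/\ker V$ and pulling subgroups back (using that inflation and induction are compatible in the evident way, and that $|G/\ker V|<|G|$), so I may treat $V$ as faithful from now on.

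First I would strip off inductions. If $V$ is primitive there is nothing to do. Otherwise $V=\Q[G]\otimes_{\Q[H_0]}U$ with $H_0<G$ proper; since $V$ is irreducible and $\Q[H_0]$ is semisimple, $U$ may be taken irreducible (replace it by an irreducible submodule, which still induces onto $V$), and we induct on $|H_0|$. By transitivity of induction this produces a subgroup $H\le G$ and a primitive irreducible $\Q[H]$-module $W$ with $V=\Q[G]\otimes_{\Q[H]}W$. Setting $K=\ker W$, the module $W$ is inflated from a faithful irreducible $\Q[H/K]$-module $E$, and $E$ is again primitive over $H/K$: if $E$ were induced from a proper $\bar J<H/K$, pulling $\bar J$ back along $H\to H/K$ would exhibit $W$ as induced from a proper subgroup of $H$. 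Hence $V=\Q[G]\otimes_{\Q[H]}E$, and the theorem reduces to showing that a $2$-group $P:=H/K$ carrying a faithful primitive irreducible rational representation $E$ must be one of the four special families, with $E$ then its unique faithful irreducible rational representation --- uniqueness being clear from the matrix models of Section \ref{special}, whose faithful complex irreducibles form a single Galois orbit.

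To analyse $P$, I would first show by Clifford theory that every abelian normal subgroup $A\lhd P$ is cyclic. The group $P$ permutes the $\Q[A]$-isotypic components of $\operatorname{Res}_A E$ transitively; if there were more than one, the Clifford correspondence would realise $E$ as induced from the stabilizer of a component, a proper subgroup, contradicting primitivity. So $\operatorname{Res}_A E$ is $\Q[A]$-isotypic, $A$ acts faithfully on a single irreducible $\Q[A]$-module, and therefore embeds in the multiplicative group of the (commutative) division algebra of endomorphisms of that module, hence is cyclic.

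Now one invokes the classical structure theorem for $2$-groups all of whose abelian normal subgroups are cyclic: such a group has a cyclic subgroup of index at most $2$, and running through the index-$2$ extensions of a cyclic $2$-group, while discarding the two types that visibly contain a non-cyclic abelian normal subgroup (the abelian group $C_{2^{m-1}}\times C_2$ and the modular maximal-cyclic group), leaves exactly $C2^n$, $D2^n$, $SD2^n$, $Q2^n$. This classification is the main obstacle --- ``classical'' but genuinely substantial, proved by taking a maximal abelian normal subgroup $A$ (self-centralizing since $P$ is nilpotent), using $P/A\hookrightarrow\operatorname{Aut}(A)\cong C_2\times C_{2^{m-2}}$, and a delicate argument forcing $[P:A]\le 2$; I would cite it (from Huppert's or Gorenstein's treatise) rather than reprove it. Together with the reductions above this proves the theorem. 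I would add a remark on optimality: each faithful irreducible rational representation of $C2^n$ with $n\ge 2$ and of $D2^n$, $SD2^n$, $Q2^n$ is itself induced from a proper subgroup (for instance $\Q(\zeta_{2^n})=\operatorname{Ind}$ from the order-$2$ subgroup of $C2^n$, with analogous statements in the other families), so the primitive case in fact only ever yields $P\cong C_2$ with $E$ the sign character, and one obtains the cleaner statement that every nontrivial irreducible rational representation of a $2$-group is induced from an order-$2$ character of a subgroup; the four-family version stated here is the classical one and is the form used in the next section.
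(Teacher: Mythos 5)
The paper offers no proof of this statement at all --- it is quoted from Hambleton--Milgram \cite{hm} --- so yours is the only argument on the table, and its skeleton (strip off inductions, use Clifford theory to see that every abelian normal subgroup of the terminal group acts faithfully and homogeneously, hence embeds in the unit group of a field and is cyclic, then invoke the classification of $2$-groups with no noncyclic abelian normal subgroup) is the right one and does establish the literal statement as printed. But there is a genuine defect in the reduction: you allow induction from \emph{arbitrary} proper subgroups, and such inductions do not preserve the endomorphism algebra of the module. The basic example is $G=Q2^3$: as $\Q[G]$-modules one has $\H\cong\Q[G]\otimes_{\Q[Z]}\Q_-$, where $Z$ is the centre and $\Q_-$ its sign character, yet $\End_{\Q[G]}(\H)=\H$ while $\End_{\Q[Z]}(\Q_-)=\Q$. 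Your own closing remark --- that with your notion of primitivity everything collapses to $H/K\cong C_2$ and a sign character --- is correct and is exactly the symptom: what you prove is true but too weak, because the theorem is used two paragraphs later through the identification $H_1(S,V)=H_1(S',E)$ \emph{as skew-Hermitian forms over the same division algebra} $D$, which requires $\End_{\Q[G]}(V)\cong\End_{\Q[H/K]}(E)$ (this is an explicit clause of Fontaine's theorem in \cite{hm}, suppressed in the paper's abbreviated quotation but silently invoked). With $H/K=C_2$ one only learns that a symplectic $\Q$-vector space has a Lagrangian $\Q$-subspace, which says nothing about Lagrangian $\H$- or $\Q(\zeta)$-submodules.

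The repair is to strip off Clifford inductions only: induce solely from the stabilizer $T$ of a homogeneous component $V_1$ of the restriction of $V$ to a normal subgroup $N\lhd G$, for which Frobenius reciprocity gives $\End_{\Q[G]}(\Ind_T^G V_1)=\End_{\Q[T]}(V_1)$, since a $\Q[T]$-map $V_1\to\Ind_T^GV_1$ must land in the unique $N$-homogeneous component of the correct type. The terminal objects are then modules whose restriction to \emph{every} normal subgroup is homogeneous --- a strictly weaker condition than your primitivity, which is why $Q2^n$, $D2^n$ and $SD2^n$ genuinely occur as endpoints: $\H$ restricted to any normal subgroup of $Q2^3$ is homogeneous, so it is Clifford-primitive even though it is induced in your sense. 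Your cyclicity argument for abelian normal subgroups goes through verbatim under this weaker hypothesis, since it only uses homogeneity of the restriction, and the appeal to the Gorenstein classification is then legitimate and correctly placed (your ``exactly the four families'' should be read as ``a subfamily of the four families'', since $D2^2$ and $D2^3$ have noncyclic abelian normal subgroups, but that is harmless). So the architecture is sound; the one wrong turn is the choice of which inductions to strip off, and it is precisely the turn that discards the Hermitian data the rest of the paper depends on.
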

Let us prove now that $\Phi_G=0$ for any finite $2$-group $G$. From Lemma \ref{reduction}, it is sufficient to prove that  $H_1(S,V)=0$ in $W^-(D)$ for any map $\pi_1(S)\to G$ and any irreducible rational representation $V$ of $G$ with division algebra $D$.

Let $H,K,E$ be as in Fontaine Theorem and denote by $D$ the division algebra involved in $E$. One has 
$$H_1(S,V)=H_1(S,\Q[G]\otimes_{\Q[H]} E)=H_1(S',E)$$
where $S'=\tilde{S}/H$ is the intermediate covering of $S$ with Galois group $H$. Moreover, this isomorphism preserves the skew-Hermitian forms and the homology group $H_1(S',E)$ can defined directly through the composition $\pi_1(S')\to H\to H/K$.

In particular, we can suppose that $G$ is one of the groups in the list and $E$ is the corresponding irreducible faithful representation. 

In the cases $G=C2^n,SD2^n$ and $Q2^n$ we have $H_2(G,\Z)=0$ which proves that $H_1(S',E)$ vanishes in $W^-(D)$. 

Let us treat the remaining case of $G=D2^n$. In this case $D=\Q(\zeta+\zeta^{-1})$ with trivial involution. The group $W^-(D)$ is then trivial as any symplectic vector space over a field has a Lagrangian.

\end{document}